       \font\tenmsb=msbm10
       \font\sevenmsb=msbm7
       \font\fivemsb=msbm5
\let\amstexloaded@\relax\fi
       \def\spaces@{\space\space\space\space\space}
       \def\spaces@@{\spaces@\spaces@\spaces@\spaces@\spaces@}
       \def\space@.{\futurelet\space@\relax}
       \def\Err@#1{\errhelp\defaulthelp@\errmessage{AmS-TeX error: #1}}
       \def\relaxnext@{\let\next\relax}
       \def\accentfam@{7}
       \def\noaccents@{\def\accentfam@{0}}
       \def\Cal{\relaxnext@\ifmmode\let\next\Cal@\else
       \def\next{\Err@{Use \string\Cal\space only in math mode}}\fi\next}
       \def\Cal@#1{{\Cal@@{#1}}}
       \def\Cal@@#1{\noaccents@\fam\tw@#1}
       \def\Bbb{\relaxnext@\ifmmode\let\next\Bbb@\else
       \def\next{\Err@{Use \string\Bbb\space only in math mode}}\fi\next}
       \def\Bbb@#1{{\Bbb@@{#1}}}
       \def\Bbb@@#1{\noaccents@\fam\msbfam#1}
\def\skipaline{\removelastskip\vskip12pt plus 1pt minus 1pt}
\def\Proof{\removelastskip\skipaline
\noindent \it Proof. \rm}
\def\beq{\begin{equation}}
\def\eeq{\end{equation}}
\newtheorem{Theorem}{Theorem}[section]
\newtheorem{Lemma}{Lemma}[section]
\newtheorem{Example}{Example}[section]
\newcommand{\proof}{\par\medbreak\it Proof.\quad\rm}
\title{The asymptotic power series of three kinds of sequences\footnote{
A Project Supported by NSF of Sichuan Province (2023NSFSC0065)
and Research and innovation team of Neijiang Normal University (17TD04).}}
 \author{Yong-Guo Shi\footnote{The corresponding author.
E-mail addresses: scumat@163.com (Y.-G. Shi)}
\\ \\
{\small College of Mathematics and Information Science, Neijiang Normal University}
\\
{\small Neijiang, Sichuan 641112, P. R. China}
}
\date{}
\begin{document}

\maketitle

\begin{quote}
\small {\bf Abstract.} We present a general method to obtain asymptotic power series for three kinds of sequences. And we give  recurrence relations for determining the coefficients of asymptotic power series for these sequences. As applications, we show how these theoretical results can be used to deduce approximation formulas for some well-known sequences and some integrals with a parameter.

\end{quote}

\small
{\it AMS(2020) subject classifications.  41A60, 45M05}

{\it Keywords and phrases.} asymptotic expansion, 
asymptotic power series,
sequence, recursion.

\hskip\parindent

\section{Introduction}

In asymptotic analysis, deriving an asymptotic expansion is generally considered to be technically difficult (cf. \cite{NMMortici2016,QMortici2015,XC1991}). 
The asymptotic expansion with respect to $n^{-1}$
$$
x_n =b_0+\frac{b_1}{n}+\frac{b_2}{n^2}+\cdots \quad \text { as } n \rightarrow \infty
$$
is called asymptotic power series \cite{Copson1965,Erdélyi1956}. 
When a sequence $x_n$ is approximated by the leading-order asymptotic estimate $y_n$,
there is a desire to require more precise approximations.
The first asymptotic expansion is the Stirling asymptotic expansion for the factorial function, which is known for centuries \cite{AbSt1970,Burić2019,Hsu1997}. Stirling’s series for the gamma function is due to Laplace
$$
\Gamma(n+1) \sim \sqrt{2 \pi n}\left(\frac{n}{e}\right)^n\left(1+\frac{1}{12 n}+\frac{1}{288 n^2}-\frac{139}{51840 n^3}-\frac{571}{2488320 n^4}+\ldots\right)
$$
as $n \rightarrow \infty$, see \cite[p. 257, Eq. (6.1.37)]{AbSt1970} and \cite{Copson1965,SriChZe2012}. 
Laforgia and Natalini \cite{LafNat2012} used Mortici's lemmas to study the asymptotic expansion of the ratio of two gamma functions.
Chen \cite{Chen2014} developed Windschitl’s approximation formula for the gamma function to two asymptotic expansions.

Many authors also investigated asymptotic expansions for Landau's constants  \cite{Chen2013,Chen2021CAM,Mortici2011}, Lebesgue's constants \cite{ChenCh2014,Nemes2012,Watson1930,Zhao2009} and Euler’s constant \cite{BuriElez2013,Chen2021CAM,Gourdon2004,Karatsuba2000,Mačys2013}.

In 2010, for one special kind of sequence, Mortici \cite{Mortici2010} presented
the following asymptotic expansion form 
$$
x_n = y_n \exp\left(\sum_{k=0}^{\infty} \frac{b_k}{n^k}\right).
$$
In this paper,
on the basis of his method, we
consider the following asymptotic expansion form 
$$
x_n = y_n \left(\sum_{k=0}^{\infty} \frac{b_k}{n^k}\right).
$$
And we present a general method to obtain asymptotic power series for three kinds of sequences.
Moreover, we abstractly refine Chen's method in \cite{Chen2021CAM} to the general form.
As applications, we present how these theoretical results can be used to deduce approximation formulas for some well-known sequences and some integrals with a parameter.

\section{Preliminaries}

For convenience, let $(s)_n$ be Pochhammer's symbol defined as
$$
(s)_0:=1,\quad (s)_n:=s(s-1)(s-2)\cdots(s-n+1), \,\,\text{for}\,\, n\geq 1.
$$
For two sequences $\{x_n\}$ and $\{y_n\}$, we write
$x_n=O(y_n)$ iff $|x_n / y_n|$ is bounded from above as $n \rightarrow \infty$, $x_n=o(y_n)$ iff $x_n / y_n \rightarrow 0$ as $n\rightarrow \infty$, and $x_n \sim y_n$ iff $x_n / y_n\rightarrow 1$ as $n \rightarrow \infty$.

\begin{Lemma}\label{ForwardShift}
Suppose that $f(n+1)$ has  asymptotic power series with respect to $(n+1)^{-k}$
$$
f(n+1) \sim \sum_{k=1}^{\infty} \frac{a_k}{(n+1)^k},\quad  x\to +\infty.
$$
Then $f(n+1)$ has a power expansion  with respect to $n^{-k}$ as follows
$$
f(n+1) \sim \sum_{k=1}^{\infty} \frac{a_k}{(n+1)^k}=\sum_{k=1}^{\infty} \sum_{j=1}^{k} a_j(-1)^{k-j}\left(\begin{array}{l}k-1 \\ j-1\end{array}\right) \frac{1}{n^k},\quad  x\to +\infty.
$$
\end{Lemma}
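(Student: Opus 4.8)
The plan is to reduce the claim to a single elementary expansion of $(n+1)^{-j}$ in powers of $n^{-1}$ and then to collect terms by a change in the order of summation. First I would observe that the coefficients $a_k$ themselves play no structural role: the right-hand side is obtained by substituting into the left-hand side the expansion of each individual term $a_j/(n+1)^j$, so it suffices to expand $1/(n+1)^j$ for a fixed $j$. For $n>1$ the generalized binomial theorem gives the absolutely convergent series
$$
\frac{1}{(n+1)^j}=\frac{1}{n^j}\Bigl(1+\tfrac1n\Bigr)^{-j}=\frac{1}{n^j}\sum_{m=0}^{\infty}\binom{-j}{m}\frac{1}{n^m}.
$$
Using $\binom{-j}{m}=(-1)^m\binom{j+m-1}{m}$ and setting $k=j+m$, so that $m=k-j$ and $\binom{j+m-1}{m}=\binom{k-1}{k-j}=\binom{k-1}{j-1}$, this rewrites as
$$
\frac{1}{(n+1)^j}=\sum_{k=j}^{\infty}(-1)^{k-j}\binom{k-1}{j-1}\frac{1}{n^k}.
$$

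Second, I would substitute this into the hypothesis $f(n+1)\sim\sum_{j\ge1}a_j/(n+1)^j$ and interchange the order of the two summations over $j$ and $k$. Since the inner sum starts at $k=j$, the double sum $\sum_{j\ge1}\sum_{k\ge j}$ becomes $\sum_{k\ge1}\sum_{j=1}^{k}$, and the coefficient of $n^{-k}$ is exactly $\sum_{j=1}^{k}a_j(-1)^{k-j}\binom{k-1}{j-1}$. This matches the asserted expansion term by term, so the identity follows once the rearrangement is justified.

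The main obstacle is precisely this justification, because an asymptotic power series need not converge, so one cannot manipulate the infinite series in $j$ as if it were a genuine sum. The clean way around this is to argue at the level of truncations: for each fixed $N$ I would write $f(n+1)=\sum_{j=1}^{N}a_j/(n+1)^j+O\bigl((n+1)^{-N-1}\bigr)$, then expand each of the finitely many terms $a_j/(n+1)^j$ to order $n^{-N}$ with an $O(n^{-N-1})$ remainder (here the binomial series in $n^{-1}$ is finite plus a controlled tail, so no convergence issue arises), and finally collect the coefficient of each $n^{-k}$ for $k\le N$. Because $(n+1)^{-N-1}=O(n^{-N-1})$, the error is absorbed into the remainder, and letting $N$ be arbitrary shows that the coefficient of $n^{-k}$ is $\sum_{j=1}^{k}a_j(-1)^{k-j}\binom{k-1}{j-1}$ for every $k$, which is the claim. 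The combinatorial reindexing in the first step and this truncation bookkeeping are the only points that require care; everything else is routine.
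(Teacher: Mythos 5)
Your proposal follows essentially the same route as the paper's proof: expand $(n+1)^{-j}=n^{-j}(1+1/n)^{-j}$ by the generalized binomial theorem, use $\binom{-j}{m}=(-1)^m\binom{j+m-1}{m}$, and reindex the double sum with $k=j+m$ to read off the coefficient of $n^{-k}$. The only difference is that you add a truncation argument to justify the formal rearrangement for a possibly divergent asymptotic series, a point the paper's proof passes over silently; this is a welcome refinement rather than a departure in method.
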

\begin{proof}  
By Binomial Theorem, we expand $f(n+1)$ in a power series with respect to $n^{-1}$, i.e.,
 $$
\sum_{i=1}^{\infty} \frac{a_i}{(n+1)^i}=\sum_{i=2}^{\infty} \frac{a_i}{n^i}\left(1+\frac{1}{n}\right)^{-i}=\sum_{i=1}^{\infty} \frac{a_i}{n^i} \sum_{j=0}^{\infty}\left(\begin{array}{c}
-i \\
j
\end{array}\right) \frac{1}{n^j}.
$$
Since
\begin{eqnarray*}
 \left(\begin{array}{l}
-i \\
j
\end{array}\right) &=&\frac{(-i)(-i-1) \cdots(-i-(j-1))}{j !} \\
& =& (-1)^j \frac{(i)(i+1) \cdots(i+j-1)}{j !} \\
& = &(-1)^j \frac{(i+j-1) !}{j !(j-1) !}\\
&=&(-1)^j\left(\begin{array}{l}
i+j-1 \\
j
\end{array}\right).
\end{eqnarray*}
Then
\begin{eqnarray*}
\sum_{i=1}^{\infty} \frac{a_i}{n^i} \sum_{j=0}^{\infty}\left(\begin{array}{l}-i \\ j\end{array}\right) \frac{1}{n^j}
& =& \sum_{i=1}^{\infty} \frac{a_i}{n^i} \sum_{j=0}^{\infty}(-1)^j\left(\begin{array}{l}i+j-1 \\ j\end{array}\right) \frac{1}{n^j}\\
& =& \sum_{k=1}^{\infty} \sum_{j=1}^{k} a_j(-1)^{k-j}\left(\begin{array}{l}k-1 \\ k-j\end{array}\right) \frac{1}{n^k}\\
& =& \sum_{k=1}^{\infty} \sum_{j=1}^{k} a_j(-1)^{k-j}\left(\begin{array}{l}k-1 \\ j-1\end{array}\right) \frac{1}{n^k}.
\end{eqnarray*}
Consequently,
$$
\sum_{k=1}^{\infty} \frac{a_k}{(n+1)^k}=\sum_{k=1}^{\infty} \sum_{j=1}^{k} a_j(-1)^{k-j}\left(\begin{array}{l}k-1 \\ j-1\end{array}\right) \frac{1}{n^k}.
$$

\end{proof}

With the similar arguments, we have the following result.

\begin{Lemma}\label{BackShift}
Suppose that $f(n-1)$ has  asymptotic power series with respect to $(n-1)^{-k}$
$$
f(n-1) \sim \sum_{k=1}^{\infty} \frac{a_k}{(n-1)^k},\quad  x\to +\infty.
$$
Then $f(n-1)$ has a power expansion  with respect to $n^{-k}$ as follows
$$
f(n-1) \sim \sum_{k=1}^{\infty} \frac{a_k}{(n-1)^k}=\sum_{k=1}^{\infty} \sum_{j=1}^{k} a_j\left(\begin{array}{l}
k-1 \\
j-1
\end{array}\right) \frac{1}{n^k},\quad  x\to +\infty.
$$
\end{Lemma}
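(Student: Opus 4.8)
The plan is to mimic the proof of Lemma~\ref{ForwardShift} almost verbatim, the only essential change being the sign inside the geometric factor. First I would write each term of the given series as
$$
\frac{a_i}{(n-1)^i} = \frac{a_i}{n^i}\left(1-\frac{1}{n}\right)^{-i},
$$
and then expand the factor $(1-1/n)^{-i}$ by the Binomial Theorem, obtaining $\sum_{j=0}^{\infty}\binom{-i}{j}\left(-1/n\right)^j$. This is the same starting move as before, applied to the sequence $a_i/(n-1)^i$ rather than $a_i/(n+1)^i$.

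Next I would invoke the identity $\binom{-i}{j} = (-1)^j\binom{i+j-1}{j}$ already established in the proof of Lemma~\ref{ForwardShift}. The key point, and the one that genuinely distinguishes this lemma from the previous one, is a cancellation of signs: here the factor $(-1)^j$ coming from $\left(-1/n\right)^j$ meets the factor $(-1)^j$ coming from the identity, so that
$$
\binom{-i}{j}\left(-\frac{1}{n}\right)^j = \binom{i+j-1}{j}\frac{1}{n^j},
$$
and every coefficient is now positive. This is precisely why the alternating factor $(-1)^{k-j}$ present in the conclusion of Lemma~\ref{ForwardShift} disappears in the present statement.

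Finally I would collect the double series $\sum_{i\ge 1}\frac{a_i}{n^i}\sum_{j\ge 0}\binom{i+j-1}{j}\frac{1}{n^j}$ by powers of $n^{-1}$. Setting $k=i+j$, the coefficient of $n^{-k}$ is $\sum_{i=1}^{k} a_i\binom{k-1}{k-i}$; using the symmetry $\binom{k-1}{k-i}=\binom{k-1}{i-1}$ and renaming the summation index $i$ as $j$ yields $\sum_{j=1}^{k} a_j\binom{k-1}{j-1}\,n^{-k}$, which is exactly the asserted expansion.

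I expect the main (indeed the only) obstacle to be bookkeeping rather than any analytic difficulty: one must keep the two separate sources of the sign straight so as not to accidentally reintroduce the alternating factor, and then re-index the Cauchy-type product correctly. Since every manipulation here is the same formal rearrangement of asymptotic power series already justified in Lemma~\ref{ForwardShift}, no new convergence or analytic input is required, and the proof can be stated in a couple of lines once the sign cancellation is observed.
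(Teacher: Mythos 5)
Your proposal is correct and is exactly the argument the paper intends: the paper gives no separate proof for this lemma, remarking only that it follows ``with the similar arguments'' as Lemma~\ref{ForwardShift}, and your computation (binomial expansion of $(1-1/n)^{-i}$, the identity $\binom{-i}{j}=(-1)^j\binom{i+j-1}{j}$, the sign cancellation, and the reindexing $k=i+j$) is precisely that similar argument carried out. The coefficient $\sum_{j=1}^{k}a_j\binom{k-1}{j-1}$ you obtain matches the stated conclusion.
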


\section{Asymptotic expansions of sequences}

In this section, we will give a general method  to obtain asymptotic expansion formulas for three kinds of sequences.

\begin{Theorem}\label{ratioAPP2}
Suppose that $\left\{x_n\right\}_{n \geq  1}$ and $\left\{y_n\right\}_{n \geq  1}$ be two sequences such that
$
\lim _{n \rightarrow \infty}  x_n/y_n=1,
$
and
$$
 \frac{x_n}{y_n}-\frac{x_{n+1}}{y_{n+1}}=\sum_{j=0}^{\infty} \frac{a_j}{n^j},\quad a_0=a_1=0,\quad 
x_n = y_n \left(\sum_{j=0}^{\infty} \frac{b_j}{n^j}\right), \quad \text { as } n \rightarrow+\infty,
$$
where the coefficients $b_j$'s are given by the following recursive formula:
\begin{equation}\label{r1}
b_0=1,b_k=\frac{1}{k}\left(a_{k+1}+\sum_{j=1}^{k-1}(-1)^{j+k}\left(\begin{array}{c}
k \\
j-1
\end{array}\right)  b_{j}\right), \quad   k=1,2,3,\ldots.  
\end{equation}
\end{Theorem}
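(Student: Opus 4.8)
The plan is to work throughout with the ratio sequence $r_n := x_n/y_n$, in terms of which the hypotheses become simply $r_n \to 1$ and $r_n - r_{n+1} \sim \sum_{j\ge 0} a_j/n^j$ with $a_0=a_1=0$. First I would posit the ansatz $r_n \sim \sum_{j\ge 0} b_j/n^j$ and read off $b_0$ at once: since $r_n\to 1$, the constant term must be $1$, giving $b_0=1$ as in \eqref{r1}. The remaining coefficients $b_k$ will be pinned down by substituting this ansatz into the given difference relation and matching equal powers of $n^{-1}$.

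The central mechanical step is to re-expand $r_{n+1} = \sum_{j\ge 0} b_j/(n+1)^j$ as a power series in $n^{-1}$, which is exactly the content of Lemma~\ref{ForwardShift}. Applying it to the tail $\sum_{k\ge 1} b_k/(n+1)^k$ (the $b_0$ term is constant and will cancel in the difference) gives
$$
r_{n+1}\sim 1+\sum_{k=1}^{\infty}\Bigl(\sum_{j=1}^{k}(-1)^{k-j}\binom{k-1}{j-1}b_j\Bigr)\frac{1}{n^{k}}.
$$
Subtracting from $r_n\sim 1+\sum_{k\ge1} b_k/n^k$, the diagonal term $j=k$ contributes exactly $b_k$ and so cancels the $b_k/n^k$ coming from $r_n$. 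Matching the coefficient of $n^{-k}$ with $a_k$ then yields the clean identity
$$
a_k = -\sum_{j=1}^{k-1} (-1)^{k-j}\binom{k-1}{j-1}\,b_j,\qquad k\ge 1,
$$
whose instances at $k=0,1$ are empty sums and reproduce the compatibility conditions $a_0=a_1=0$.

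To obtain the recurrence \eqref{r1}, which solves for $b_k$, I would read this identity at index $k+1$ rather than $k$, so that $b_k$ surfaces through the diagonal term. Isolating the $j=k$ summand, whose sign–binomial factor is $(-1)^{1}\binom{k}{k-1}=-k$, pulls out a factor $k b_k$; transposing the lower-order terms $b_1,\dots,b_{k-1}$ and dividing by $k$ then produces a recurrence of precisely the shape of \eqref{r1}. The main obstacle I anticipate is not conceptual but bookkeeping: keeping the parity of $(-1)^{k-j}$ correct through the shift $k\mapsto k+1$, and converting the binomial $\binom{k-1}{\,\cdot\,}$ into the $\binom{k}{j-1}$ that appears in \eqref{r1}. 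The signs are genuinely easy to misplace here, so I would validate the formula on the first few cases $k=1,2,3$ as a cross-check before asserting the general form.

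A secondary, more foundational point is to justify that these manipulations are legitimate for an asymptotic (as opposed to convergent) series: the term-by-term subtraction and the rearrangement underpinning Lemma~\ref{ForwardShift} are valid because one may truncate at any fixed order $N$, expand the finitely many surviving terms exactly, and absorb the tails into a uniform $O(n^{-N-1})$ remainder. With that observation in place, the coefficient matching is rigorous order by order, and the recurrence determines the $b_k$ uniquely from the data $a_2,a_3,\dots$ together with $b_0=1$.
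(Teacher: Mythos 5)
Your proposal follows essentially the same route as the paper: expand $x_{n+1}/y_{n+1}$ via Lemma~\ref{ForwardShift}, match coefficients of $n^{-k}$ in the difference, and shift the index $k\mapsto k+1$ so that the diagonal term isolates $k\,b_k$. The one step you defer --- the sign bookkeeping --- is exactly where care is needed: carrying it out, the $j=k$ summand of $a_{k+1}=-\sum_{j=1}^{k}(-1)^{k+1-j}\binom{k}{j-1}b_j$ contributes $+k\,b_k$ and the remaining terms carry the factor $(-1)^{k+1-j}=(-1)^{j+k+1}$, so the recurrence one actually obtains is $b_k=\frac{1}{k}\bigl(a_{k+1}+\sum_{j=1}^{k-1}(-1)^{j+k+1}\binom{k}{j-1}b_j\bigr)$, which differs by a sign from \eqref{r1} as printed but agrees with the formula \eqref{EluerCoef} used in Example~\ref{EulerConstant} (and with the numerical value $b_2=-\tfrac{1}{12\gamma}$ there, whereas \eqref{r1} as printed would give $-\tfrac{7}{12\gamma}$). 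So the small-$k$ cross-check you propose is not optional: it would reveal that \eqref{r1} contains a sign error that your derivation, done carefully, corrects.
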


\Proof
Let 
$$
P_m{(n)}
=\sum_{j=0}^m\frac{b_j}{n^j},\quad \frac{x_n}{y_n}=P_m{(n)}+o\left(\frac{1}{n^m}\right), m\geq 1.
$$
Since $\lim _{n \rightarrow \infty} x_n/y_n=1$, it implies that  
$b_0=1$. Let 
$$
P_m{(n+1)}
=\sum_{j=0}^m\frac{c_j}{n^j}+o\left(\frac{1}{n^m}\right).
$$
By Lemma \ref{ForwardShift}, we have
\begin{eqnarray*}
P_m{(n+1)}&=&1+\sum_{k=1}^{m} \sum_{j=1}^{k} (-1)^{k-j}\left(\begin{array}{l}k-1 \\ j-1\end{array}\right) \frac{b_j}{n^k}  +o\left(\frac{b_k}{n^m}\right).
\end{eqnarray*}
Then
\begin{eqnarray*}
\frac{ x_n  }{y_n }-\frac{  x_{n+1}}{ y_{n+1}}&=&
\left(P_m(n)+o\left(\frac{1}{n^m}\right)\right)-\left(P_m(n+1)+o\left(\frac{1}{(n+1)^m}\right)\right)\\
&=& P_m(n)- P_m(n+1)+o\left(\frac{1}{n^m}\right)\\
&=&  \sum_{k=0}^m\frac{a_k}{n^k}+o\left(\frac{1}{n^m}\right).
 \end{eqnarray*}
Thus
$$
a_0=a_1=0,a_k=
(-1)^k\left(\sum_{j=1}^{k-1}(-1)^{j+1}\left(\begin{array}{c}
k-1 \\
j-1
\end{array}\right)  b_{j}\right), \quad 2 \leq k \leq m.
$$
Then we can obtain (\ref{r1}).

\begin{Theorem}\label{ratioAPP3new}
Suppose that $\left\{x_n\right\}_{n \geq  1}$ and $\left\{y_n\right\}_{n \geq  1}$ be two sequences such that $x_n\sim y_n$, and
$$
\frac{ x_n  }{y_n }\cdot\frac{  x_{n-1}}{ y_{n-1}}=\sum_{j=0}^{\infty} \frac{a_j}{n^j},
 \quad a_0=1,\quad 
 x_n =y_n \left(\sum_{j=0}^{\infty} \frac{b_j}{n^j}\right), \quad \text { as } n \rightarrow+\infty.
$$
Then the coefficients $b_j$'s are given by the following recursive formula:
\begin{equation}\label{r21}
c_0=1,  \quad 
b_1=\frac{a_1}{2},  \quad c_i=\sum_{j=1}^i\left(\begin{array}{c}
i-1 \\
j-1
\end{array}\right)  b_{j},
\quad i=1,2,...,k-1.
\end{equation}
\begin{equation}\label{r22}
b_0=1,  \quad 
b_k=\frac{1}{2}\left(a_k-\sum_{i=1}^{k-1} b_{k-i}c_i
-\sum_{j=1}^{k-1} \left(\begin{array}{c}
k-1 \\
j-1
\end{array}\right)  b_{j}
\right),
\quad k\geq 2.
\end{equation}

\end{Theorem}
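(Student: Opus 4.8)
The plan is to set $r_n := x_n/y_n = \sum_{j=0}^{\infty} b_j n^{-j}$ and to read off the two recursions by comparing the two available descriptions of the product $r_n r_{n-1}$. First I would observe that the hypothesis $x_n \sim y_n$ forces $r_n \to 1$, hence $b_0 = 1$. Next I would re-expand the shifted factor $r_{n-1}$ as a genuine power series in $n^{-1}$: writing $r_{n-1} = \sum_{i=0}^{\infty} c_i n^{-i}$, Lemma \ref{BackShift} applied to the tail $\sum_{k \geq 1} b_k (n-1)^{-k}$, together with the unchanged constant term $b_0$, yields exactly $c_0 = 1$ and $c_i = \sum_{j=1}^{i} \binom{i-1}{j-1} b_j$ for $i \geq 1$, which is formula (\ref{r21}).

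The heart of the argument is then a Cauchy product. Since both $r_n$ and $r_{n-1}$ are now expressed as power series in $n^{-1}$, their product is $r_n r_{n-1} = \sum_{k=0}^{\infty}\bigl(\sum_{i=0}^{k} b_{k-i} c_i\bigr) n^{-k}$, and this must agree term by term with the given expansion $\sum_{k} a_k n^{-k}$. Matching the $k=0$ coefficient recovers $a_0 = b_0 c_0 = 1$, consistent with the hypothesis, while for each $k \geq 1$ the matching produces $a_k = \sum_{i=0}^{k} b_{k-i} c_i$.

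The key step, and the one demanding care, is isolating $b_k$ from this convolution, because $b_k$ enters twice. It appears once as the $i=0$ term $b_k c_0 = b_k$, and once hidden inside $c_k$, whose top ($j=k$) summand in $c_k = \sum_{j=1}^{k}\binom{k-1}{j-1} b_j$ is precisely $b_k$. Peeling off both occurrences together with the two endpoint terms of the convolution leaves $a_k = 2 b_k + \sum_{j=1}^{k-1}\binom{k-1}{j-1} b_j + \sum_{i=1}^{k-1} b_{k-i} c_i$; solving for $b_k$ generates the factor $\tfrac12$ and gives (\ref{r22}). Specializing to $k=1$ makes both interior sums empty, so $a_1 = 2 b_1$, i.e. $b_1 = a_1/2$, which is the remaining assertion of (\ref{r21}). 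I expect the only genuine pitfall to be bookkeeping: keeping the double role of $b_k$ straight and ensuring the $i=0$ and $i=k$ endpoint terms of the two convolution sums are separated out without being double-counted.
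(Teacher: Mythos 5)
Your proposal is correct and follows essentially the same route as the paper: expand the shifted factor via Lemma \ref{BackShift} to get the $c_i$, form the Cauchy product, and match coefficients with $\sum a_k n^{-k}$. You simply carry out explicitly the final extraction of $b_k$ (accounting for its double occurrence in $b_k c_0$ and in the top term of $c_k$), a step the paper leaves as "we can obtain (\ref{r21}) and (\ref{r22})."
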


\Proof
Let 
$$
P_m{(n)}
=\sum_{j=0}^m\frac{b_j}{n^j},\quad \frac{x_n}{y_n}=P_m{(n)}+o\left(\frac{1}{n^m}\right), m\geq 1.
$$
Since $\lim _{n \rightarrow \infty} x_n/y_n=1$, it implies that  
$b_0=1$. Let 
$$
P_m{(n-1)}
=\sum_{j=0}^m\frac{c_j}{n^j}+o\left(\frac{1}{n^m}\right).
$$
By Lemma \ref{BackShift}, we have
\begin{eqnarray*}
P_m{(n-1)}
&=&1+\sum_{k=1}^m\left(\sum_{j=1}^k\left(\begin{array}{c}
k-1 \\
j-1
\end{array}\right)  b_{j}\right)\frac{1}{n^k}+o\left(\frac{1}{n^m}\right).    
\end{eqnarray*}
Consequently,
$$
c_0=1,  \quad c_k=\sum_{j=1}^k\left(\begin{array}{c}
k-1 \\
j-1
\end{array}\right)  b_{j},
\quad k\geq 1.
$$
Then
\begin{eqnarray*}
\frac{ x_n  }{y_n }\cdot\frac{  x_{n-1}}{ y_{n-1}}&=&
\left(P_m(n)+o\left(\frac{1}{n^m}\right)\right)\left(P_m(n-1)+o\left(\frac{1}{(n-1)^m}\right)\right)\\
&=&  \sum_{k=0}^m\left(\sum_{j=0}^k b_j c_{k-j}\right) \frac{1}{n^k}+o\left(\frac{1}{n^m}\right)\\
&=&  \sum_{k=0}^m\frac{a_k}{n^k}+o\left(\frac{1}{n^m}\right).
 \end{eqnarray*}
Thus
$$
b_0=a_0=1,\quad \sum_{j=0}^{k} b_j c_{k-j}=a_k,
 k=1,2,\ldots.
$$
Together with
$$
c_0=1,  \quad c_k=\sum_{j=1}^k\left(\begin{array}{c}
k-1 \\
j-1
\end{array}\right)  b_{j},
\quad k\geq 1,
$$
we can obtain (\ref{r21}) and (\ref{r22}).

\begin{Theorem}\label{ratioAPP4}
 Let $\left\{x_n\right\}_{n \geq  1}$ and $\left\{y_n\right\}_{n \geq  1}$ be two sequences such that
$x_n\sim y_n$. 
Suppose that
$$
\frac{ x_n  }{y_n }\bigg/\frac{  x_{n-1}}{ y_{n-1}}=\sum_{j=0}^{\infty} \frac{a_j}{n^j}, \quad a_0=1,
 \quad a_1=0,\quad 
 x_n =y_n \left(\sum_{j=0}^{\infty} \frac{b_j}{n^j}\right), \quad \text { as } n \rightarrow+\infty.
$$
Then the coefficients $b_j$'s are given by the following recursive formula:
\begin{equation}\label{r31}
c_0=1, \quad b_0=1, \quad b_1=-a_2, \quad c_i=\sum_{j=1}^i\left(\begin{array}{c}
i-1 \\
j-1
\end{array}\right)  b_{j},
\quad i=1,2,...,k-1,
\end{equation}
and
\begin{equation}\label{r32}
b_k=-\frac{1}{ k}\left[\sum_{j=0}^{k-1} c_j a_{k+1-j}+  \sum_{j=1}^{k-1}\left(\begin{array}{c}
k \\
j-1
\end{array}\right)  b_j\right].
\end{equation}
\end{Theorem}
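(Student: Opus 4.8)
The plan is to follow the same template as Theorems~\ref{ratioAPP2} and \ref{ratioAPP3new}. Set $P_m(n)=\sum_{j=0}^m b_j/n^j$ with $x_n/y_n=P_m(n)+o(1/n^m)$; the hypothesis $x_n\sim y_n$ forces $b_0=1$. The first routine step is to re-expand the backward shift $P_m(n-1)=\sum_{j=0}^m c_j/n^j+o(1/n^m)$ about $n^{-1}$. Applying Lemma~\ref{BackShift} term by term gives $c_0=1$ and $c_k=\sum_{j=1}^k\binom{k-1}{j-1}b_j$ for $k\ge 1$, which is exactly \eqref{r31}; this is a direct substitution with no obstacle.

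Next I would convert the quotient hypothesis into a product identity. Writing $A(n):=\sum_{j\ge 0}a_j/n^j$, the assumption $\big(x_n/y_n\big)\big/\big(x_{n-1}/y_{n-1}\big)=A(n)$ becomes $P_m(n)=P_m(n-1)\,A(n)+o(1/n^m)$, since $P_m(n-1)\to 1$ controls the error. Hence $\sum_k b_k/n^k=\big(\sum_i c_i/n^i\big)\big(\sum_j a_j/n^j\big)$ up to $o(1/n^m)$, and matching the coefficient of $n^{-k}$ yields the convolution $b_k=\sum_{j=0}^k c_j a_{k-j}$. This step is mechanical once the product reformulation is in place.

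The crux — and the reason the stated recursion looks shifted relative to \eqref{r1} and \eqref{r22} — is that this convolution does \emph{not} determine $b_k$ at level $k$. Because $a_0=1$ and $a_1=0$, the term $c_k a_0=c_k$ contributes $b_k$ (its top coefficient is $\binom{k-1}{k-1}=1$) while the term $c_{k-1}a_1$ vanishes, so the two appearances of $b_k$ cancel against the left-hand side. What survives is the consistency relation $0=\sum_{l=1}^{k-1}\binom{k-1}{l-1}b_l+\sum_{j=0}^{k-2}c_j a_{k-j}$, whose highest unknown is $b_{k-1}$, carried by the coefficient $\binom{k-1}{k-2}=k-1$. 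I would therefore solve this relation for $b_{k-1}$ and relabel $k\mapsto k+1$, which produces precisely \eqref{r32}; specializing the resulting formula to $k=1$ gives $b_1=-c_0 a_2=-a_2$, matching \eqref{r31}. The main obstacle is recognizing this one-step index shift: unlike the difference and product cases, matching at order $n^{-k}$ here pins down $b_{k-1}$ rather than $b_k$, and one must note that the coefficient $\binom{k-1}{k-2}=k-1$ (the source of the $1/k$ prefactor) is nonzero, so that the recursion is well posed for every $k\ge 1$.
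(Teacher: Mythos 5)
Your proposal is correct and follows essentially the same route as the paper: the same $P_m(n)$ ansatz, the same use of Lemma~\ref{BackShift} to get $c_k=\sum_{j=1}^k\binom{k-1}{j-1}b_j$, and the same convolution identity $b_k=\sum_{j=0}^k c_j a_{k-j}$. In fact you supply the one step the paper leaves implicit --- the cancellation of $b_k$ via $a_0=1$, $a_1=0$ and the resulting index shift $k\mapsto k+1$ that isolates $b_k$ with the factor $1/k$ --- so your write-up is, if anything, more complete than the original.
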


\Proof
Let 
$$
P_m{(n)}
=\sum_{j=0}^m\frac{b_j}{n^j},\quad \frac{x_n}{y_n}=P_m{(n)}+o\left(\frac{1}{n^m}\right), m\geq 1.
$$
Since $\lim _{n \rightarrow \infty} x_n/y_n=1$, it implies that  
$b_0=1$. Let 
$$
P_m{(n-1)}
=\sum_{j=0}^m\frac{c_j}{n^j}+o\left(\frac{1}{n^m}\right).
$$
Consequently,
$$
c_0=1, \quad c_k=\sum_{j=1}^k\left(\begin{array}{c}
k-1 \\
j-1
\end{array}\right)  b_{j},\quad k\geq 1.
$$
Then
\begin{eqnarray*}
\frac{ x_n  }{y_n }&=&\frac{  x_{n-1}}{ y_{n-1}}\left(\sum_{j=0}^{\infty} \frac{a_j}{n^j}\right)\\
&=&
\left(P_m(n-1)+o\left(\frac{1}{(n-1)^m}\right)\right)\left(\sum_{j=0}^{\infty} \frac{a_j}{n^j}\right)\\
&=&  \sum_{k=0}^m\left(\sum_{j=0}^k c_j a_{k-j}\right) \frac{1}{n^k}+o\left(\frac{1}{n^m}\right)\\
&=&  \sum_{k=0}^m\frac{b_k}{n^k}+o\left(\frac{1}{n^m}\right).
 \end{eqnarray*}
Therefore
$$
b_0=a_0=1,\quad \sum_{j=0}^{k} c_j a_{k-j}=b_k,
\quad 
 k=1,2,\ldots.
$$
Together with
$$
c_0=1,  \quad c_k=\sum_{j=1}^k\left(\begin{array}{c}
k-1 \\
j-1
\end{array}\right)  b_{j},
\quad k\geq 1,
$$
we can obtain 
(\ref{r31}) and (\ref{r32}).

\section{Examples and applications}

In this section, we present some examples relating to the construction of the asymptotic expansion  formulas for some well-known sequences.

\begin{Example}\label{EulerConstant}
Let  $\gamma$ be the Euler’s constant. 
Then the sequence 
$$
x_n=\sum_{k=1}^n \frac{1}{k}-\ln n
$$
has the following asymptotic expansion (cf. \cite{CC2014,Knuth1962})
\begin{eqnarray*}
 x_n &=&\sum_{k=1}^n \frac{1}{k}-\ln n =
  \sum_{j=0}^{\infty} \frac{b_j}{n^j}
 \\
  &= & \gamma+\frac{1}{2 n}-\frac{1}{12 n^2}+\frac{1}{120 n^4}-\frac{1}{252 n^6}+\frac{1}{240 n^8}-\frac{1}{132 n^{10}}+o\left(\frac{1}{n^{10}}\right)
,  
\end{eqnarray*}
where  the  coefficients $b_j$'s can be calculated recursively by the formula:
\begin{equation}\label{EluerCoef}
  b_0=\gamma,\quad b_1=1/2,\quad
  b_k=\frac{(-1)^{k+1}}{k+1}+\frac{(-1)^{k}}{k}
\sum_{j=1}^{k-1}(-1)^{j+1}\left(\begin{array}{c}
k \\
j-1
\end{array}\right)  b_{j}, \quad   k\geq 2.  
\end{equation}

\end{Example}

{\bf Solution.}
  Since
$$
\lim _{n \rightarrow \infty} x_n=\lim _{n \rightarrow \infty}\sum_{k=1}^n \frac{1}{k}-\ln n=\gamma.
$$
Take $y_n=\gamma$. Then 
\begin{eqnarray*}
    \frac{x_n}{y_n}-\frac{x_{n+1}}{y_{n+1}}
    &=&\frac{1}{\gamma}\left(-\frac{1}{n+1}-\ln n+\ln (n+1)\right)
   =\frac{1}{\gamma}\left(\ln(1+\frac{1}{n})-\frac{1}{n}\cdot \frac{1}{1+\frac{1}{n}}\right)
     \\
          &=&\frac{1}{\gamma}\left(\sum_{k=1}^{\infty} \frac{(-1)^{k-1}}{k n^k}-\sum_{k=1}^{\infty} \frac{(-1)^{k-1}}{n^k}\right)= \frac{1}{\gamma}\sum_{k=0}^{\infty} \frac{a_k}{n^k},
 \end{eqnarray*}
where 
$$
a_0=a_1=0,a_k=\frac{(-1)^k(k-1)}{k},\quad k=2,3,\ldots.
$$
Then the equations (\ref{EluerCoef}) follows directly from  Theorem 3.1.
On the other hand, the Euler-Maclaurin summation (e.g. \cite{Knuth1962}) can be used to have a complete asymptotic expansion of the harmonic numbers. We have 
$$
H_n-\log (n) =\gamma+\frac{1}{2 n}-\sum_{k \geq 1} \frac{B_{2 k}}{2 k} \frac{1}{n^{2 k}}
$$
where the $B_{2 k}$ are the Bernoulli numbers. 
Thus  these results leads to the   identity with respect to Bernoulli number $B_j=-jb_j$ for $j=2,3,\ldots$.

\begin{Example}\label{DirichletIntegral1}
Let  $\{a_j\}_{j\geq 0}$ be given by the formula
$$
a_0=1,a_1=-\frac{1}{2},a_j=(-1)^j \frac{\left(\frac{1}{2}\right)_j }{j!},\quad j=2,3,\ldots.
$$
Consider 
the integral
$$
I_n =\frac{1}{\pi} \int_{-\pi / 2}^{\pi / 2}(\cos t)^n \mathrm{~d} t.
$$
Then the integral $I_n$ has the following asymptotic expansion (cf. \cite[pp. 39-40]{Young2017}, \cite[p. 44]{LBC2010})
\begin{eqnarray*}
  I_n &=&
 \sqrt{\frac{2}{\pi n}}
\left(\sum_{j=0}^{\infty} \frac{b_j}{n^j}\right)
 \\
  &=&
\sqrt{\frac{2}{\pi n}}
\left(1-\frac{1}{4n}+\frac{1}{32n^2}+\frac{5}{128n^3}-\frac{21}{2048n^4}-\frac{399}{8192n^5}+o\left(\frac{1}{n^5}\right)\right)
,  
\end{eqnarray*}
where the coefficients $b_j$'s can be calculated recursively by the formula:
\begin{equation}
\label{triintn}
\left\{\begin{array}{l}
c_0=1,  \quad 
b_1=\frac{a_1}{2},  \quad c_i=\sum\limits_{j=1}^i\left(\begin{array}{c}
i-1 \\
j-1
\end{array}\right)  b_{j},
\quad i=1,2,...,k-1. \\
b_0=1,  \quad 
b_k=\frac{1}{2}\left(a_k-\sum\limits_{i=1}^{k-1} b_{k-i}c_i
-\sum\limits_{j=1}^{k-1} \left(\begin{array}{c}
k-1 \\
j-1
\end{array}\right)  b_{j}
\right),
\quad k\geq 2.
\end{array}\right.
\end{equation}
\end{Example}

{\bf Solution.}
Note that $I_n$ is strictly decreasing. And it can be proved by induction
$$
n I_n I_{n-1}=\frac{2}{\pi}, \quad n \geq  1.
$$
Take $x_n=I_n$. Since
$x_n\sim 
 \sqrt{\frac{2}{\pi n}} $ as  $n \rightarrow+\infty$, one can take $y_n= \sqrt{\frac{2}{\pi n}}$. Then
 $$
 \frac{ x_n  x_{n-1}}{y_n y_{n-1}}=\frac{ \frac{2}{\pi n}}{ \sqrt{\frac{2}{\pi n}}  \sqrt{\frac{2}{\pi (n-1)}}}
 =\left(1-\frac{1}{n}\right)^{\frac{1}{2}}
 = \sum_{j=0}^{\infty} \frac{a_j}{n^j},
  $$
where 
$$
a_0=1,a_j=(-1)^j \frac{\left(\frac{1}{2}\right)_j}{j!},\quad j=2,3,\ldots.
$$
Then the equations (\ref{triintn}) follows directly from  Theorem 3.2.

 We report, in Table 1, some numerical values obtained with a rounding off at the tenth decimal place on
the parameter $n=11$.

\begin{table}[ht]
\begin{center}

\caption{Asymptotic estimate $I_{n,k}:=\sqrt{\frac{2}{\pi n}}
\left(\sum\limits_{j=0}^{k} \frac{b_j}{n^j}\right)$}
\begin{tabular}{llllll}
\hline $I_{11}$ & $ I_{11,1}$ & $I_{11,2}$ & $I_{11,3}$ & $I_{11,4}$  & $I_{11,5}$ \\
\hline $0.235172672$ & $0.235103718$ & $0.235165849$ & $0.23517291$ & $0.235172741$  & $0.235172669$\\
\hline
\end{tabular}
\end{center}
\end{table}

\begin{Lemma}\label{seqapp} (cf. \cite{ChenEV2013,ElezovicVuk2014})
Let $ n\in \mathbb{N} \backslash\{0\}$. Then
\begin{eqnarray*}
\left(1-\frac{1}{n^2}\right)^n
   & = & \sum_{k=0}^{\infty}  \frac{s_k }{n^k},
\end{eqnarray*}
where $s_0=1$ and
$$
s_k=-\frac{1}{k} \sum_{j=1}^{\lfloor \frac{k+1}{2}\rfloor} \frac{2j-1}{j} s_{k+1-2j}, \quad k =1,2,\cdots.
$$
\end{Lemma}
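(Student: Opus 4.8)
The plan is to substitute $t = 1/n$ and set
$$
g(t) := \left(1-t^2\right)^{1/t} = \exp\!\left(\frac{\ln(1-t^2)}{t}\right),
$$
so that the claim becomes the assertion that $g$ has the Maclaurin expansion $g(t)=\sum_{k\ge 0}s_k t^k$ with the stated recursion for its coefficients. First I would record the elementary identity
$$
\ln g(t) = \frac{\ln(1-t^2)}{t} = -\sum_{m=1}^{\infty}\frac{t^{2m-1}}{m},
$$
which is an odd function, analytic on $|t|<1$ and vanishing at $t=0$. Consequently $g=\exp(\ln g)$ is analytic on $|t|<1$ with $g(0)=1$; this simultaneously guarantees that the series $\sum_{k\ge 0}s_k n^{-k}$ converges for $n>1$ and forces the initial value $s_0=1$.

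The key step is to avoid seeking a polynomial differential equation for $g$ (the base $(1-t^2)^{1/t}$ genuinely carries a logarithm, so the direct differential-equation route is awkward) and instead to exploit the logarithmic derivative. Writing $L(t)=\ln g(t)$, differentiation of $g=e^{L}$ gives $g'=L'g$, that is
$$
\sum_{k\ge 1} k\,s_k\,t^{k-1} \;=\; L'(t)\sum_{\ell\ge 0} s_\ell\,t^{\ell},
\qquad L'(t)=-\sum_{m=1}^{\infty}\frac{2m-1}{m}\,t^{2m-2}.
$$
Extracting the coefficient of $t^{k-1}$ from the Cauchy product on the right selects the pairs with $2m-2+\ell=k-1$, i.e.\ $\ell=k+1-2m$ subject to $\ell\ge 0$, so that $m$ runs from $1$ to $\lfloor (k+1)/2\rfloor$. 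This yields
$$
k\,s_k = -\sum_{m=1}^{\lfloor (k+1)/2\rfloor}\frac{2m-1}{m}\,s_{k+1-2m},
$$
which is precisely the asserted recursion after renaming $m$ as $j$.

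I expect no serious obstacle: the whole argument rests on the single observation that the logarithmic derivative linearizes the problem and converts the power $(1-t^2)^{1/t}$ into a convolution identity whose index bookkeeping produces exactly the factor $\tfrac{2j-1}{j}$ coming from differentiating $t^{2m-1}/m$. The only points deserving care are routine justifications — the analyticity of $g$ on $|t|<1$ (so that termwise differentiation and coefficient comparison are legitimate and the expansion in $n^{-1}$ is genuinely convergent rather than merely asymptotic), together with the shift of summation index when passing from $\ell$ to $m$. If one prefers, the relation $g'=L'g$ and the coefficient matching may first be carried out entirely in the ring of formal power series, with convergence then supplied a posteriori by the analyticity remark above.
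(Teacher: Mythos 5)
Your proposal is correct and follows essentially the same route as the paper: both write $(1-1/n^2)^n=\exp(L)$ with $L$ the odd logarithmic series, differentiate to get $g'=L'g$, and extract the recursion from the resulting Cauchy product (your substitution $t=1/n$ versus the paper's differentiation in the variable $x=n$ is purely cosmetic). Your added remarks on analyticity for $|t|<1$ and the legitimacy of termwise differentiation are a small bonus of rigor that the paper omits.
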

\proof
Let $n=x$. Then 
\begin{eqnarray*}
\left(1-\frac{1}{n^2}\right)^n
&=& 
\exp\left(n\ln{\left(1- \frac{1}{n^{2}}\right)}\right)
\\&=& 
\exp\left(\sum_{k=1}^{\infty} \frac{-1}{k } x^{1-2k}\right)
\\&=& 
\sum_{k=0}^{\infty} s_k x^{-k}
\end{eqnarray*}
Let $x$ tend to infinity. Then one has $s_0=1$.
Differentiating the last equation above with respect to $x$, we get
\begin{eqnarray*}
\sum_{k=1}^{\infty} -k s_k x^{-k-1} & = &\left(\sum_{k=1}^{\infty} \frac{2k-1}{k } x^{-2k}\right)\left(\sum_{k=0}^{\infty} s_k x^{-k}\right) \\
& =& \sum_{k=1}^{\infty}\left(\sum_{j=1}^{\lfloor \frac{k+1}{2}\rfloor} \frac{2j-1}{j} s_{k+1-2j}\right) x^{-k-1}
\end{eqnarray*}
where $s_0=1$ and
$$
s_k=-\frac{1}{k} \sum_{j=1}^{\lfloor \frac{k+1}{2}\rfloor} \frac{2j-1}{j} s_{k+1-2j}, \quad k =1,2,\cdots.
$$

\begin{Example}\label{expn}
It is well-known that the classic limit
$$
e= \lim _{n \rightarrow \infty}\left(1+\frac{1}{n}\right)^n
$$
For $k=0,1,2,\ldots$, let  $a_k=\sum_{j=0}^{k}s_j$ where
  $s_0=1$ and
$$
s_j=-\frac{1}{j} \sum_{i=1}^{\lfloor \frac{j+1}{2}\rfloor} \frac{2i-1}{i} s_{j+1-2i}, \quad j =1,2,\cdots.
$$
Then the sequence $x_n=(1+1/n)^n$ has the following asymptotic expansion  (cf. \cite{BK1998,CC2014AMM})
\begin{eqnarray*}
  x_n &=& (1+1/n)^n
=
e
\left(\sum_{j=0}^{\infty} \frac{b_j}{n^j}\right)
\\
&=& 
e\left(1-\frac{1}{2 n}+\frac{11}{24 n^2}-\frac{7}{16 n^3}+\frac{2447}{5760 n^4}-\frac{959}{2304 n^5}+\frac{238043}{580608 n^6}-\cdots\right)
,  
\end{eqnarray*}
where the coefficients $b_j$ can be calculated recursively by the formula:
\begin{equation}
\label{een}
\left\{\begin{array}{l}
c_0=1,\,b_0=1,\,b_1=-a_2, \quad c_i=\sum\limits_{j=1}^i\left(\begin{array}{c}
i-1 \\
j-1
\end{array}\right)  b_{j},
\quad i=1,2,...,k-1,\\
 b_k=-\frac{1}{k}\left[\sum\limits_{j=0}^{k-1} c_j a_{k+1-j}+ \sum\limits_{j=1}^{k-1}\left(\begin{array}{c}
k \\
j-1
\end{array}\right)  b_j\right].
\end{array}\right.
\end{equation}
\end{Example}

\proof
Take $y_n=e$, then it follows from Lemma \ref{seqapp} that
\begin{eqnarray*}
\frac{ x_n  }{y_n }\bigg/\frac{  x_{n-1}}{ y_{n-1}}&=&
\frac{ (1+\frac{1}{n})^n }{ (1+\frac{1}{n-1})^{n-1} }= \frac{ (1+\frac{1}{n})^n }{ (1+\frac{1}{n-1})^{n} }\cdot \left(1+\frac{1}{n-1}\right)
\\&=& 
\left(1-\frac{1}{n^2}\right)^n\left(1+\frac{\frac{1}{n}}{1-\frac{1}{n}}\right)
\\&=& 
\left(\sum_{k=0}^{\infty}\frac{s_k}{n^k}\right)\left(\sum_{k=0}^{\infty}\frac{1}{n^k}\right)
\\
 &=& \sum_{k=0}^{\infty} \frac{a_k}{n^k},
\end{eqnarray*}
where $a_0=1$, $a_1=0$, for $k=2,3,\ldots$,
$a_k=\sum_{j=0}^{k}s_j$ where $s_j$'s are defined by Lemma \ref{seqapp}.
Then the equations (\ref{een}) follows directly from  Theorem 3.3.

\begin{Example}\label{PartIntegral1}
Let  $\{a_j\}_{j\geq 0}$ be given by the formula
$$
a_0=1,a_1=0, a_k=-\frac{1}{2}-\frac{1}{2}\sum_{j=1}^{k-1} \frac{(-1)^j\left(-\frac{1}{2}\right)_j}{j!}+\frac{(-1)^k\left(-\frac{1}{2}\right)_k}{k!},\quad k=2,3,\ldots.
$$
Consider 
the integral
$$
 J_n =\int_{0}^{\infty} \frac{1}{\left(1+t^{2}\right)^{n}} \mathrm{~d} t.
$$
Then the integral $J_n$ has the following asymptotic expansion 
\begin{eqnarray*}
  J_n &=& 
\frac{1}{2} \sqrt{\frac{\pi}{n}}
\left(\sum_{j=0}^{\infty} \frac{b_j}{n^j}\right)
\\
&=& 
\frac{1}{2} \sqrt{\frac{\pi}{n}}
\left(1+\frac{3}{8n}+\frac{25}{128n^2}+\frac{105}{1024n^3}+\frac{302}{5965n^4}+o\left(\frac{1}{n^4}\right)\right)
,  
\end{eqnarray*}
where the coefficients $b_j$ can be calculated recursively 
by the formula:
\begin{equation}
\label{ratintn}
\left\{\begin{array}{l}
c_0=1,\,b_0=1,\,b_1=-a_2, \quad c_i=\sum\limits_{j=1}^i\left(\begin{array}{c}
i-1 \\
j-1
\end{array}\right)  b_{j},
\quad i=1,2,...,k-1,\\
 b_k=-\frac{1}{k}\left[\sum\limits_{j=0}^{k-1} c_j a_{k+1-j}+ \sum\limits_{j=1}^{k-1}\left(\begin{array}{c}
k \\
j-1
\end{array}\right)  b_j\right].
\end{array}\right.
\end{equation}
\end{Example}

\proof
With integration by parts, we get
$$
J_{n}=\frac{2 n-3}{2 n-2} J_{n-1}.
$$
Take $x_n=J_n$. By Laplace's method, one can obtain
$x_n\sim y_n=
\frac{1}{2} \sqrt{\frac{\pi}{n}} $ as  $n \rightarrow+\infty$.  then
\begin{eqnarray*}
\frac{ x_n  }{y_n }\bigg/\frac{  x_{n-1}}{ y_{n-1}}&=&\frac{ (2 n-3) \cdot  \frac{1}{2}\sqrt{\frac{\pi}{n-1}}}{ (2 n-2) \cdot \frac{1}{2} \sqrt{\frac{\pi}{n}}}= \frac{(2 n-3) }{(2 n-2) }\sqrt{\frac{n}{n-1}}
\\&=& 
\frac{1-\frac{3}{2n} }{1-\frac{1}{n}} \left(1-\frac{1}{n}  \right)^{-\frac{1}{2}}
\\&=& 
\left(1-\frac{1}{2}\sum_{k=1}^{\infty}\frac{1}{n^k}\right)\left(1+\sum_{j=1}^{\infty}\frac{\left(-\frac{1}{2}\right)_j}{j!}\frac{(-1)^j}{n^j}\right)
\\
 &=& \sum_{k=0}^{\infty} \frac{a_k}{n^k},
\end{eqnarray*}
where $a_0=1,a_1=0$ and
$$
 a_k=-\frac{1}{2}-\frac{1}{2}\sum_{j=1}^{k-1} \frac{(-1)^j\left(-\frac{1}{2}\right)_j}{j!}+\frac{(-1)^k\left(-\frac{1}{2}\right)_k}{k!},\quad k=2,3,\ldots.
$$
Then the equations (\ref{ratintn}) follows directly from  Theorem 3.3.

In Table 2, some numerical values obtained with a rounding off at the tenth decimal place on
the parameter $n=10$.

\begin{table}[ht]
\begin{center}
\caption{Asymptotic estimate $J_{n,k}:=\frac{1}{2}\sqrt{\frac{\pi}{n}}
\left(\sum\limits_{j=0}^{k} \frac{b_j}{n^j}\right)$}
\begin{tabular}{llllll}
\hline $J_{10}$ & $ J_{10,0}$ & $J_{10,1}$ & $J_{10,2}$ & $J_{10,3}$  & $J_{10,4}$ \\
\hline $0.291336507$ & $0.280249561$ & $0.290758919$ & $0.291306282$ & $0.291335018$  & $0.291336437$\\
\hline
\end{tabular}
\end{center}
\end{table} 

\vskip 2mm

\parskip=0.4cm
\vspace{1cm}

\end{document}